%#!pdflatex
\documentclass[preprint]{imsart}
\RequirePackage[OT1]{fontenc}
\usepackage{amsthm,amsmath,natbib,bm}
\RequirePackage[colorlinks,citecolor=blue,urlcolor=blue]{hyperref}
\usepackage{booktabs}

% settings
%\pubyear{2006}
%\volume{0}
%\issue{0}
%\firstpage{1}
%\lastpage{8}
\arxiv{1004.0234}
\doi{10.1016/j.jspi.2013.01.007}

\startlocaldefs
\numberwithin{equation}{section}
\theoremstyle{plain}
\newtheorem{thm}{Theorem}[section]

\theoremstyle{remark}
\newtheorem{remark}{Remark}[section]

\endlocaldefs

\allowdisplaybreaks
\newcommand{\bep}{\bm{\epsilon}}
\newcommand{\bbe}{\bm{\beta}}
\newcommand{\byy}{\bm{y}}
\newcommand{\bX}{\bm{X}}
\newcommand{\bxx}{\bm{x}}
\newcommand{\bone}{\bm{1}}
\newcommand{\bzero}{\bm{0}}
\newcommand{\bth}{\bm{\theta}}

\begin{document}

\begin{frontmatter}
\title{Improved robust Bayes estimators of the error variance in linear models}%\protect\thanksref{T1}}
\runtitle{Improved robust Bayes Estimators}
%\thankstext{T1}{Footnote to the title with the `thankstext' command.}

\begin{aug}
\author{\fnms{Yuzo} \snm{Maruyama}%\thanksref{t1}
\thanksref{t1,m1}
\ead[label=e1]{maruyama@csis.u-tokyo.ac.jp}}
\and
\author{\fnms{William, E.} \snm{Strawderman}
\thanksref{t2,m2}
\ead[label=e2]{straw@stat.rutgers.edu}}

\thankstext{t1}{This work was partially supported by KAKENHI \#21740065 \& \#23740067.}
\thankstext{t2}{This work was partially supported by a grant from the Simons
Foundation (\#209035 to William Strawderman).}
\address{University of Tokyo\thanksmark{m1} and Rutgers University\thanksmark{m2} \\
\printead{e1,e2}}
%\thankstext{t1}{Some comment}
%\thankstext{t2}{First supporter of the project}
%\thankstext{t3}{Second supporter of the project}
\runauthor{Y. Maruyama and W. Strawderman}

%\affiliation{Some University and Another University}
\end{aug}

\begin{abstract}
We consider the problem of estimating the error variance in a general linear model
when the error distribution is assumed to be spherically symmetric, but not necessary
Gaussian. In particular we study the case of a scale mixture of Gaussians including
the particularly important case of the multivariate-$t$ distribution.
Under Stein's loss, we construct a class of estimators that improve
on the usual best unbiased (and best equivariant) estimator. Our class has the
interesting double robustness property of being simultaneously generalized Bayes
(for the same generalized prior) 
and minimax over the entire class of scale mixture of Gaussian distributions.
\end{abstract}

\begin{keyword}[class=AMS]
\kwd[Primary ]{62C20}
\kwd{62F15}
\kwd[; secondary ]{62A15}
\end{keyword}

\begin{keyword}
\kwd{estimation of variance}
\kwd{harmonic prior}
\kwd{robustness}
\end{keyword}
%\tableofcontents
\end{frontmatter}

\section{Introduction}
\label{sec:intro}
Suppose the linear regression model is used to relate $y$ to the
$p$ predictors $x_1, \dots, x_p$,
\begin{equation} \label{full-model}
\byy = \alpha\bone_n+\bX\bbe + \sigma\bep
\end{equation}
where $\alpha$ is an unknown intercept parameter,
$\bone_n$ is an $n\times 1$ vector of ones,
$\bX=(\bxx_1,\dots, \bxx_p)$ is an $n \times p$ design matrix,
and $\bbe$ is a $p \times 1$ vector of unknown
regression coefficients.
In the error term, $\sigma$ is an unknown scalar and
 $\bep=(\epsilon_1,\dots,\epsilon_n)'$ 
 has a spherically symmetric distribution,
\begin{equation}\label{bep_sim_f}
\bep \sim f(\bep'\bep)
\end{equation}
%with $E[\bep]=\bm{0}_n$,
%$E[\epsilon^2_1]= \dots =E[\epsilon^2_n]$, and $E[\epsilon_i\epsilon_j]=0$
%for any $i\neq j$.
where $f(\cdot)$ is the probability density,
$E[\bep]=\bzero_n$, 
and $\mbox{Var}[\bep]=\bm{I}_n$.
We assume that the columns of $\bX$ have been centered so that 
$\bxx'_i\bone_n = 0$ for $1 \leq i \leq p$.
We also assume that $n > p+1$ and $\{\bxx_1,\dots,\bxx_p\}$ are linearly independent, 
which implies that 
\begin{equation*}
\mbox{rank} \bX=p.
\end{equation*}
The class of error distributions we study includes the class of (spherical)
multivariate-$t$ distributions, probably the most important of the
possible alternative error distributions.
It is often felt in practice that the error distribution has heavier tails
than the normal and the class of multivariate-$t$ distributions is a flexible
class that allows for this possibility. They are also contained in the 
class of scale mixture of normal distributions and thus, by
 De Finetti's Theorem, represent exchangeable distributions regardless of the
sample size $n$.

In this paper we consider estimation of $\sigma^2=E[\{\sigma\epsilon_i\}^2]$,
the variance of each component of error term,
under Stein's loss (See \cite{James-Stein-1961}),
\begin{equation}\label{Stein'sLoss}
 L_S(\delta,\sigma^2)=\delta/\sigma^2-\log(\delta/\sigma^2)-1.
\end{equation}
Hence the risk function $ R(\{\alpha,\bbe,\sigma^2\},\delta)$
is given by $E[L_S(\delta,\sigma^2)]$.
The best equivariant estimator is the unbiased estimator given by
\begin{equation}
 \delta_U=\frac{\mbox{RSS}}{n-p-1}
\end{equation}
where RSS is Residual Sum of Squares given by
\begin{equation*}
 \mbox{RSS}=\|(I-\bX(\bX'\bX)^{-1}\bX')\{\byy-\bar{y}\bone_n\}\|^2.
\end{equation*}
In the Gaussian case, the Stein effect in the variance estimation problem
has been studied in many papers including
\cite{Stein-1964, Straw-1974a, Brewster-Zidek-1974, Maru-Straw-2006}.
\cite{Stein-1964} showed that
\begin{equation}
 \delta^{ST}=\min\left( \delta_U, \frac{\|\byy-\bar{y}\bone_n\|^2}{n-1}\right)
\end{equation}
dominates $ \delta_U$. For smooth (generalized Bayes) estimators,
\cite{Brewster-Zidek-1974} gave the improved estimator
\begin{equation*}
 \delta^{BZ}=\phi^{BZ}(R^2)\delta_U
\end{equation*}
where $ \phi^{BZ}(\cdot) $ is a smooth increasing function given by
\begin{equation} \label{phi-BZ}
 \phi^{BZ}(R^2)=1-\frac{2(1-R^2)^{(n-p-1)/2}}{n-1}
\left\{\int_0^1 t^{p/2-1}(1-R^2t)^{(n-p-1)/2}dt\right\}^{-1}
\end{equation}
and $R^2$ is the coefficient of determination given by 
\begin{equation}\label{eq:R^2}
R^2=\frac{ \|\bX(\bX'\bX)^{-1}\bX'\{\byy-\bar{y}\bone_n\}\|^2}{\|\byy-\bar{y}\bone_n\|^2}.
\end{equation}
\cite{Maru-Straw-2006} proposed another class of 
improved generalized Bayes estimators.
The proofs in all of these papers seem to depend strongly 
on the normality assumption. 
So it seems then, that it may be difficult or impossible to extend the dominance results to the 
non-normal case. 
Also many statisticians have thought that 
estimation of variance is more sensitive to the assumption of error distribution
compared to estimation of the mean vector, where some robustness results have been
derived by \cite{Maru-Straw-2005}.

Note that we use the term ``robustness''
in this sense of distributional robustness over the class of spherically
symmetric error distributions.
We specifically are not using the term to indicate a high breakdown point.
The use of the term ``robustness'' in our sense is however common
(if somewhat misleading) in the context of insensitivity to the
error distribution in the context of shrinkage literature.

In this paper, we derive a class of generalized Bayes estimators relative to
a class of separable priors of the form $\pi(\alpha,\bbe)\{\sigma^2\}^{-1}$
and show that the resulting generalized Bayes estimator is independent of the form
of the (spherically symmetric) sampling distribution. 
Additionally, we show, for a particular subclass of these separable priors, 
$ (\bbe'\bX'\bX\bbe)^{-(p-2)/2}\{\sigma^2\}^{-1}$, that the
resulting robust generalized Bayes estimator has the additional robustness property
of being minimax and dominating the unbiased estimator $\delta_U$ simultaneously,
for the entire class of scale mixture of Gaussians.

A similar (but somewhat stronger) robustness property has been studied in
the context of estimation of the vector of regression parameters $(\alpha,\bbe)$
by \cite{Maru-Straw-2005}. They gave separable priors of a form similar to
priors in this paper for which the generalized Bayes estimators are minimax
for the entire class of spherically symmetric distributions (and not just
scale mixture of normals).
We suspect that the distributional robustness property of the present paper
also extends well beyond the class of scale mixture of normal distributions
but have not been able to demonstrate just how much further it does extend.

%Our class of improved estimators utilizes the coefficient of determination
%$R^2$ in making a (smooth) choice between $\delta_U$ (when $R^2$ is large)
%and $\|\byy-\bar{y}\bone_n\|^2/(n-1)$ (when $R^2$ is small)
%and reflects the relatively common knowledge among statisticians, 
%that $ \delta_U=\mbox{RSS}/(n-p-1)$ overestimates $\sigma^2$ when $R^2$ is small.
%See Remark \ref{rem:Rsq} for details.
%
The organization of this paper is as follows.
In Section \ref{sec:GB} we derive generalized Bayes estimators under separable
priors and demonstrate that the resulting estimator is independent of the (spherically symmetric)
sampling density.
In Section \ref{sec:minimax} we show that a certain subclass of estimators
which are minimax under normality remains minimax 
for the entire class of scale mixture of normals.
Further, we show that certain generalized Bayes estimators studied in
Section \ref{sec:GB} have this (double) robustness property. Some comments
are given in Section \ref{sec:CR} and an appendix
gives proofs of certain of the results.

\section{A generalized Bayes estimator with respect to the harmonic prior}
\label{sec:GB}
In this section, we show that 
the generalized Bayes estimator of the variance
with respect to a certain class of priors
is independent of the particular sampling model under Stein's loss.
Also we will give an exact form of this estimator for a particular subclass of 
``(super)harmonic'' priors that, we will later show, is minimax for a large
subclass of spherically symmetric error distributions.
\begin{thm} \label{thm:indep}
The generalized Bayes estimator with respect to 
$\pi(\alpha, \bbe,\sigma^2)=\pi(\alpha,\bbe)\{\sigma^2\}^{-1}$
under Stein's loss \eqref{Stein'sLoss} is independent of the particular 
spherically symmetric sampling model and hence
is given by the generalized Bayes estimator under the Gaussian distribution.
%$f=(2\pi)^{-n/2}\exp(-t/2)$.
\end{thm}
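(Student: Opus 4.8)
The plan is to exploit the explicit form of the Bayes rule under Stein's loss and then show that the sampling density $f$ enters only through a fixed scalar. First I would recall that minimizing the posterior expected loss $E^\pi[\delta/\sigma^2-\log(\delta/\sigma^2)-1\mid\byy]$ over $\delta$ gives the generalized Bayes estimator
\begin{equation*}
\delta^\pi=\frac{1}{E^\pi[\sigma^{-2}\mid\byy]}.
\end{equation*}
Writing $Q=Q(\alpha,\bbe)=\|\byy-\alpha\bone_n-\bX\bbe\|^2$ and using the sampling density $(\sigma^2)^{-n/2}f(Q/\sigma^2)$ together with the separable prior $\pi(\alpha,\bbe)\{\sigma^2\}^{-1}$, this estimator becomes a ratio of integrals,
\begin{equation*}
\delta^\pi=\frac{\iiint (\sigma^2)^{-n/2-1}f(Q/\sigma^2)\,\pi(\alpha,\bbe)\,d\alpha\,d\bbe\,d\sigma^2}{\iiint (\sigma^2)^{-n/2-2}f(Q/\sigma^2)\,\pi(\alpha,\bbe)\,d\alpha\,d\bbe\,d\sigma^2}.
\end{equation*}

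The key step is to carry out the $\sigma^2$-integration first, for fixed $(\alpha,\bbe)$. Substituting $s=Q/\sigma^2$ turns $\int_0^\infty (\sigma^2)^{-a}f(Q/\sigma^2)\,d\sigma^2$ into $Q^{1-a}\int_0^\infty s^{a-2}f(s)\,ds$, which factorizes each multiple integral into a one-dimensional $f$-integral (a pure constant, free of $\alpha,\bbe,\byy$) times an integral of a power of $Q$ against $\pi$. Taking the ratio, the $f$-dependent pieces collapse to the single scalar
\begin{equation*}
\frac{\int_0^\infty s^{n/2-1}f(s)\,ds}{\int_0^\infty s^{n/2}f(s)\,ds},
\end{equation*}
multiplying the $f$-free ratio $\iint Q^{-n/2}\pi\,d\alpha\,d\bbe\,/\,\iint Q^{-n/2-1}\pi\,d\alpha\,d\bbe$.

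It remains to show that this scalar is the same for every admissible $f$. Here I would convert the two radial integrals back to $n$-dimensional integrals over $\bep$ using the surface-area identity for spherically symmetric functions, so that $\int_0^\infty s^{n/2-1}f(s)\,ds$ is proportional to the full integral $\int f(\bep'\bep)\,d\bep=1$ (the density normalization), while $\int_0^\infty s^{n/2}f(s)\,ds$ is proportional, with the \emph{same} constant, to $\int \bep'\bep\, f(\bep'\bep)\,d\bep=E[\bep'\bep]=n$ (which follows from $E[\bep]=\bzero_n$ and $\mbox{Var}[\bep]=\bm{I}_n$). Hence the scalar equals $1/n$ for every $f$ in the class, and
\begin{equation*}
\delta^\pi=\frac{1}{n}\,\frac{\iint Q^{-n/2}\,\pi(\alpha,\bbe)\,d\alpha\,d\bbe}{\iint Q^{-n/2-1}\,\pi(\alpha,\bbe)\,d\alpha\,d\bbe},
\end{equation*}
which no longer involves $f$; specializing to the Gaussian $f$ yields the identical expression, which proves the claim.

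I expect the main obstacle to be two regularity matters rather than the algebra: justifying the interchange in the order of integration (Fubini) and the finiteness of all the integrals, i.e.\ propriety of the generalized posterior, which is what makes $\delta^\pi$ well defined; and, conceptually, recognizing that it is precisely the second-moment normalization $\mbox{Var}[\bep]=\bm{I}_n$ (not merely that $f$ is a density) that pins the scalar at $1/n$. Without that normalization the scalar would instead be $1/E[\bep'\bep]$, and the estimator would retain a dependence on $f$ through this constant.
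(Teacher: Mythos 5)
Your proposal is correct and follows essentially the same route as the paper: integrate out $\sigma^2$ first so that the sampling density enters only through the scalar ratio $\int_0^\infty s^{n/2-1}f(s)\,ds\big/\int_0^\infty s^{n/2}f(s)\,ds$, then use the normalization $\int f(\bep'\bep)\,d\bep=1$ together with $E[\bep'\bep]=n$ to show this ratio equals $1/n$ for every $f$ in the class. The paper phrases the conclusion as the ratio of $f$-constants to Gaussian constants collapsing to $1$, which is the same computation you carry out.
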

\begin{proof}
 See Appendix.
\end{proof}

Now let $p \geq 3$ and $ \pi(\alpha,\bbe)= (\bbe'\bX'\bX\bbe)^{-(p-a)/2}$.
This is related to a family of (super)harmonic functions as follows.
If, in the above joint prior for $(\alpha,\bbe)$, we make 
the change of variables, $\bth=(\bX'\bX)^{1/2}\bbe$,
the joint prior of $(\alpha,\bth)$ becomes
\begin{equation} \label{improper-joint-1}
\pi(\alpha,\bth) 
 = \|\bth\|^{-(p-a)}.
\end{equation}
The Laplacian of $\|\bth\|^{-(p-a)}$ 
is given by 
\begin{equation*}
 \sum_{i=1}^p \frac{\partial^2}{\partial \theta_i^2}
\|\bth\|^{-(p-a)} 
 = (p-a)(2-a)\|\bth\|^{-(p-a)-2},
\end{equation*}
which is negative (i.e.~super-harmonic) for $2<a<p$ and is zero (i.e.~harmonic)
for $a=2$. 
\begin{thm}\label{thm:harmonic}
Under the model \eqref{full-model} with spherically symmetric error distribution 
\eqref{bep_sim_f}
and Stein's loss \eqref{Stein'sLoss}, 
the generalized Bayes estimator with respect to 
$\pi(\alpha,\bbe,\sigma^2)=(\bbe'\bX'\bX\bbe)^{-(p-a)/2}\{\sigma^2\}^{-1}$ 
for $0<a<p $ is given by
\begin{equation} \label{eq:GB}
 \delta_a^{GB}=\phi_a^{GB}(R^2)\frac{\mathrm{RSS}}{n-p-1}
\end{equation}
where 
\begin{equation}
\phi_a^{GB}(R^2)= \frac{n-p-1}{n-a-1}\frac{\int_0^1
t^{p/2-a/2-1}(1-t)^{a/2-1}(1-R^2t)^{(n-p-a-1)/2}
\, dt}{\int_0^1
t^{p/2-a/2-1}(1-t)^{a/2-1}(1-R^2t)^{(n-p-a+1)/2}
\, dt}.
\end{equation}
\end{thm}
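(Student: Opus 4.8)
The plan is to invoke Theorem~\ref{thm:indep} to pass to the Gaussian model, where the generalized Bayes estimator under Stein's loss \eqref{Stein'sLoss} is the reciprocal of the posterior mean of $\sigma^{-2}$, i.e.\ $\delta^{GB}=\{E[\sigma^{-2}\mid\byy]\}^{-1}$, obtained by differentiating the posterior expected loss in $\delta$. First I would write the Gaussian posterior proportional to $(\sigma^2)^{-n/2-1}\exp\bigl(-\|\byy-\alpha\bone_n-\bX\bbe\|^2/(2\sigma^2)\bigr)(\bbe'\bX'\bX\bbe)^{-(p-a)/2}$, use the orthogonal decomposition $\|\byy-\alpha\bone_n-\bX\bbe\|^2=\mathrm{RSS}+n(\alpha-\bar y)^2+(\bbe-\hat\bbe)'\bX'\bX(\bbe-\hat\bbe)$ afforded by the centering of $\bX$, and change variables to $\bth=(\bX'\bX)^{1/2}\bbe$, writing $\hat\bth=(\bX'\bX)^{1/2}\hat\bbe$ so that $\|\hat\bth\|^2=R^2\|\byy-\bar y\bone_n\|^2$. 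Integrating out $\alpha$ (a one-dimensional Gaussian integral) and then $\sigma^2$ (a gamma integral) reduces both numerator and denominator to the single $p$-dimensional integral over $\bth$
\begin{equation*}
I_m=\int \|\bth\|^{-(p-a)}\bigl(\mathrm{RSS}+\|\bth-\hat\bth\|^2\bigr)^{-m/2}\,d\bth,
\end{equation*}
and after the ratio of gamma factors is simplified yields $\delta^{GB}=(n-1)^{-1}I_{n-1}/I_{n+1}$.

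The technical heart is the closed-form evaluation of $I_m$, and this is where I expect the real work. I would represent both factors by gamma integrals, valid precisely when $0<a<p$ (and $a<m$, which holds since $a<p<n-1$):
\begin{equation*}
\|\bth\|^{-(p-a)}=\frac{1}{\Gamma(\tfrac{p-a}{2})}\int_0^\infty s^{(p-a)/2-1}e^{-s\|\bth\|^2}\,ds,\qquad \bigl(\mathrm{RSS}+\|\bth-\hat\bth\|^2\bigr)^{-m/2}=\frac{1}{\Gamma(\tfrac{m}{2})}\int_0^\infty \lambda^{m/2-1}e^{-\lambda(\mathrm{RSS}+\|\bth-\hat\bth\|^2)}\,d\lambda.
\end{equation*}
The resulting inner integral over $\bth$ is then an elementary Gaussian integral; completing the square gives $\bigl(\pi/(s+\lambda)\bigr)^{p/2}\exp\bigl(-s\lambda\|\hat\bth\|^2/(s+\lambda)-\lambda\,\mathrm{RSS}\bigr)$.

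Next I would make the change of variables $t=\lambda/(s+\lambda)$, $v=s+\lambda$ (Jacobian $v$), which separates the two outer integrations; integrating out $v$ by one further gamma integral collapses the double integral to
\begin{equation*}
I_m=\frac{\pi^{p/2}\Gamma(\tfrac{m-a}{2})}{\Gamma(\tfrac{p-a}{2})\Gamma(\tfrac{m}{2})}\int_0^1 t^{a/2-1}(1-t)^{(p-a)/2-1}\bigl(\mathrm{RSS}+(1-t)\|\hat\bth\|^2\bigr)^{-(m-a)/2}\,dt.
\end{equation*}
Factoring $\mathrm{RSS}+(1-t)\|\hat\bth\|^2=\mathrm{RSS}\,(1-R^2t)/(1-R^2)$ exhibits the dependence on $R^2$. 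Forming the ratio $I_{n-1}/I_{n+1}$, the power of $\mathrm{RSS}/(1-R^2)$ collapses to one, the gamma prefactors combine to $(n-1)/(n-a-1)$, and multiplying by $(n-1)^{-1}$ gives $\delta^{GB}=\frac{\mathrm{RSS}}{(n-a-1)(1-R^2)}\,\mathcal N/\mathcal D$, where $\mathcal N,\mathcal D$ are the two $(0,1)$-integrals with $(1-R^2t)$ raised to the negative powers $-(n-a-1)/2$ and $-(n-a+1)/2$.

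The final step is to match this to the stated form, which differs by having $t\leftrightarrow 1-t$ exchanged in the Beta weight and $(1-R^2t)$ raised to a positive power. These integrals are Gauss hypergeometric functions in the Euler representation, and the reconciliation is exactly Euler's transformation ${}_2F_1(\alpha,\beta;\gamma;z)=(1-z)^{\gamma-\alpha-\beta}{}_2F_1(\gamma-\alpha,\gamma-\beta;\gamma;z)$ with $\gamma=p/2$: it turns $(1-R^2t)^{-(n-a\mp1)/2}$ into $(1-R^2)^{(p-n\pm1)/2}(1-R^2t)^{(n-p-a\mp1)/2}$, and the leftover powers of $(1-R^2)$ cancel the factor $1/(1-R^2)$, leaving precisely $\delta_a^{GB}=\phi_a^{GB}(R^2)\,\mathrm{RSS}/(n-p-1)$. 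The main obstacle is therefore not a single deep step but the careful bookkeeping of exponents through the two gamma representations, the change of variables, and the concluding hypergeometric identity; the one genuine subtlety is ensuring every representation and integral converges, which is what pins down the range $0<a<p$.
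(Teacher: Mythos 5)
Your argument is correct and arrives at \eqref{eq:GB}, but it executes the computation in a different order from the paper and consequently needs one extra identity at the end. The paper's proof also reduces to the ratio of Gaussian marginals $m_0^G(\byy)/m_1^G(\byy)$ via Theorem \ref{thm:indep}, but rather than integrating out $\sigma^2$ early it first writes the prior $(\bbe'\bX'\bX\bbe)^{-(p-a)/2}$ as a scale mixture of normals with covariance $\sigma^2 g(\bX'\bX)^{-1}$ and mixing weight $g^{a/2-1}$ (its display \eqref{improper-joint}), then integrates in the order $\alpha$, $\bbe$, $\sigma^2$, $g$; every step is an elementary Gaussian or gamma integral, and the single substitution $t=1/\{1+g(1-R^2)\}$ lands directly on the stated Beta-type integrals with the positive exponents $(n-p-a\mp 1)/2$. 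Because you integrate out $\sigma^2$ before introducing any representation, you must carry two gamma representations (one for the prior, one for the Student-type kernel $(\mathrm{RSS}+\|\bth-\hat\bth\|^2)^{-m/2}$) and then reconcile your integrals, which carry $(1-R^2t)^{-(n-a\mp 1)/2}$, with the stated form via Euler's transformation of ${}_2F_1$ with $\gamma=p/2$. I verified the bookkeeping: the Jacobian $v$ of $(s,\lambda)\mapsto(t,v)$, the exponent $v^{(m-a)/2-1}$, the identity $\mathrm{RSS}+(1-t)\|\hat\bth\|^2=\mathrm{RSS}(1-R^2t)/(1-R^2)$, the constant $(n-1)/(n-a-1)$ from the gamma ratios, and the net factor $(1-R^2)^{(p-n+1)/2-(p-n-1)/2}=(1-R^2)$ cancelling your $1/(1-R^2)$ all check out, as does the role of $0<a<p$ (together with $a<n-1$, which follows from $a<p<n-1$) in guaranteeing convergence. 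What the paper's ordering buys is brevity and no hypergeometric identity; what yours buys is a self-contained closed form for $I_m$ and a more transparent view of where the convergence constraints come from. The only minor omission is the constant Jacobian $|\bX'\bX|^{-1/2}$ from the change of variables $\bth=(\bX'\bX)^{1/2}\bbe$, which of course cancels in the ratio $I_{n-1}/I_{n+1}$.
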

\begin{proof}
See Appendix.
\end{proof}

\section{Minimaxity}
\label{sec:minimax}
In this section, we demonstrate robustness of 
minimaxity under scale mixture of normals for a class of estimators which 
are minimax under normality.
\begin{thm}\label{thm:main}
Assume
$\delta_\phi=\phi(R^2)\{\mathrm{RSS}/(n-p-1)\}$ where $\phi(\cdot)$ is monotone 
nondecreasing, improves on the unbiased estimator, $\delta_U$, under normality
and Stein's loss.
Then $\delta_\phi$ also improves on the unbiased estimator,
$\delta_U$, under scale mixture of normals and Stein's loss.
\end{thm}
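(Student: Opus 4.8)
The plan is to condition on the mixing scale. Representing the scale mixture as $\bep\mid w\sim N(\bzero_n,w\bm{I}_n)$ with $\int_0^\infty w\,dG(w)=1$, the conditional model is Gaussian, $\byy\mid w\sim N(\alpha\bone_n+\bX\bbe,\ w\sigma^2\bm{I}_n)$, so that conditionally on $w$ the residual sum of squares and $U=\|\bX(\bX'\bX)^{-1}\bX'\{\byy-\bar y\bone_n\}\|^2$ behave exactly as in the normal problem, but with variance $w\sigma^2$ and noncentrality $\lambda/w$, where $\lambda=\bbe'\bX'\bX\bbe/\sigma^2$. The essential point is that Stein's loss is still measured against the true variance $\sigma^2$, not against $w\sigma^2$. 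Writing $\mathrm{RSS}=w\sigma^2 S_0$ with $S_0\sim\chi^2_{n-p-1}$, $R^2=U_0/(U_0+S_0)$ with $U_0\sim\chi^2_p(\lambda/w)$, and using $\delta_\phi/\delta_U=\phi(R^2)$, the conditional loss difference is $L_S(\delta_\phi,\sigma^2)-L_S(\delta_U,\sigma^2)=\{wS_0/(n-p-1)\}(\phi(R^2)-1)-\log\phi(R^2)$, and the scale-mixture risk difference is its expectation averaged over $G$.

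The structural step is to separate out the ``matched'' normal risk difference $\Delta_N(\mu)=E_\mu[\{S_0/(n-p-1)\}(\phi(R^2)-1)-\log\phi(R^2)]$ at a generic noncentrality $\mu$ (with $U_0\sim\chi^2_p(\mu)$), which is nonpositive for every $\mu\ge0$ by the hypothesis that $\delta_\phi$ improves on $\delta_U$ under normality. The only discrepancy between the conditional difference and $\Delta_N(\lambda/w)$ is the extra factor $w$ on the first term, so the conditional risk difference equals
\begin{equation*}
\Delta_N(\lambda/w)+(w-1)\,A(\lambda/w),\qquad A(\mu):=E_\mu\Big[\tfrac{S_0}{n-p-1}\big(\phi(R^2)-1\big)\Big].
\end{equation*}
Integrating over $G$ and using $\int w\,dG(w)=\int dG(w)=1$, the first term contributes $\int\Delta_N(\lambda/w)\,dG(w)\le0$. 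For the second, since $\int(w-1)\,dG(w)=0$ I would center it as $\int(w-1)\{A(\lambda/w)-A(\lambda)\}\,dG(w)$; the integrand is pointwise nonpositive \emph{provided $A$ is nondecreasing} (for $w>1$ the factor $w-1>0$ while $A(\lambda/w)\le A(\lambda)$, and both signs reverse for $w<1$). Thus everything reduces to the monotonicity of $A$.

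To prove $A$ nondecreasing, note first that $A(\mu)=E_\mu[\{S_0/(n-p-1)\}\phi(R^2)]-1$ since $E[S_0]=n-p-1$ does not depend on $\mu$. Expanding the noncentral law as a Poisson mixture, $U_0\mid K\sim\chi^2_{p+2K}$ with $K\sim\mathrm{Poisson}(\mu/2)$, gives $A(\mu)+1=E_K[a_K]$ with $a_k=E[\{S_0/(n-p-1)\}\phi(R^2)]$ under $U_0\sim\chi^2_{p+2k}$ central. Because $\mathrm{Poisson}(\mu/2)$ is stochastically increasing in $\mu$, it suffices to show that the sequence $a_k$ is nondecreasing in $k$.

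This last reduction is the heart of the argument and the place I expect the real work to lie. The decisive observation is that for independent central chi-squares $U_0\sim\chi^2_{p+2k}$ and $S_0\sim\chi^2_{n-p-1}$, the total $V=U_0+S_0\sim\chi^2_{n-1+2k}$ and the ratio $B=R^2=U_0/V\sim\mathrm{Beta}((p+2k)/2,(n-p-1)/2)$ are \emph{independent}. Writing $S_0=V(1-B)$ and using this independence, $a_k=\{E[V]/(n-p-1)\}\,E[(1-B)\phi(B)]$; carrying out the Beta integral of $(1-B)\phi(B)$ absorbs the factor $(1-B)$ into the Beta weight, and the resulting ratio of Beta normalizing constants cancels $E[V]/(n-p-1)=(n-1+2k)/(n-p-1)$ \emph{exactly}, leaving the clean identity $a_k=E[\phi(B_k')]$ with $B_k'\sim\mathrm{Beta}((p+2k)/2,(n-p+1)/2)$. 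Since $\mathrm{Beta}(\alpha,\beta)$ is stochastically increasing in $\alpha$ for fixed $\beta$ (its densities have monotone likelihood ratio) and $\phi$ is nondecreasing, $a_k=E[\phi(B_k')]$ is nondecreasing in $k$, which closes the chain. The subtlety to watch throughout is that the hypothesis on $\Delta_N$ enters only as a sign condition: no quantitative control of the normal risk is needed beyond its nonpositivity, and the entire extra burden of the mixture is carried by the first-order monotonicity of the correction term $A$.
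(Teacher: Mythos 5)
Your proof is correct, and while it shares the paper's high-level strategy (condition on the mixing scale, invoke the normal-theory dominance at each conditional noncentrality, and control the mismatch caused by measuring Stein's loss against $\sigma^2$ rather than $w\sigma^2$ via a covariance inequality in the mixing variable), the key monotonicity step is carried out by a genuinely different route. The paper conditions additionally on $V=\mathrm{RSS}/(\sigma^2\tau^2)$, so that the weight $V/(n-p-1)$ is a constant given $V$ and the needed monotonicity in $\tau$ reduces to the bare monotone-likelihood-ratio property of the noncentral $\chi^2_{p}$ in its noncentrality (its Lemma \ref{lem:non-chi}); the covariance inequality is applied conditionally on $V$ and then integrated. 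You instead marginalize over both chi-squares first, isolate the correction term $(w-1)A(\lambda/w)$, and prove $A$ nondecreasing directly. Because your $A$ carries the weight $S_0/(n-p-1)$, which is not a function of $R^2$ alone, you are forced into the Poisson-mixture/Beta computation, whose payoff is the exact identity $E_\mu[\{S_0/(n-p-1)\}\phi(R^2)]=E[\phi(B')]$ with $B'\sim\mathrm{Beta}((p+2K)/2,(n-p+1)/2)$, $K\sim\mathrm{Poisson}(\mu/2)$. In fact the two arguments bound the sign of the same quantity, $\mathrm{Cov}\bigl(\tau^2,\ \{S_0/(n-p-1)\}\{1-\phi(R^2)\}\bigr)$, computed through different conditionings: the paper's route is shorter and needs no explicit integration, while yours is self-contained at the cost of one Beta identity and yields a cleaner structural decomposition (matched normal risk difference plus a centered correction). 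Both use $E[\tau^2]=1$ in the same essential way, and both, like the paper, establish only weak improvement, which matches the statement as proved there.
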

\begin{proof}
Let $f$ be a scale mixture of normals where the scalar $\tau$ 
satisfies $E[\tau^2]=1$, that is, 
\begin{equation*}
 f(t)=\int_0^\infty (2\pi\tau)^{-n/2}\exp(-t/\{2\tau^2\})g(\tau^2)d\tau^2.
\end{equation*}
Then $\bm{y}|\tau^2\sim N_n(\alpha\bone_n+\bX\bbe,\sigma^2\tau^2\bm{I}_n)$ and
the risk difference between these estimators is given by
\begin{equation}\label{eq:risk-diff}
\begin{split}
%\Delta &=
&R\left(\{\alpha,\bbe,\sigma^2\}, \delta_U\right)
-R\left(\{\alpha,\bbe,\sigma^2\}, \delta_\phi\right) \\
&= E_{\byy}\left[ \frac{\left\{1-\phi(R^2)\right\}}{n-p-1} \frac{\mbox{RSS}}{\sigma^2}
+\log \phi(R^2) \right] \\
&= E_{\tau^2}\left[ E_{\byy|\tau^2}\left[ \frac{\left\{1-\phi(R^2)\right\}}{n-p-1} \frac{\mbox{RSS}}{\sigma^2}
+\log \phi(R^2) \right] \right]\\
 & = E_{\tau^2}\left[E_{\byy|\tau^2}
\left[ \frac{\left\{1-\phi(R^2)\right\}\mbox{RSS}}{(n-p-1)\{\sigma^2\tau^2\}}
+\log \phi(R^2) \right] \right]  \\
&\qquad+\frac{1}{n-p-1}E_{\tau^2}\left[E_{\byy|\tau^2}\left[\frac{\{1-\phi(R^2)\}\mbox{RSS}}{\sigma^2\tau^2}
\left(\tau^2-1\right)\right]\right] .
\end{split}
\end{equation}
In the the first term of the right-hand side of the above equality,
\begin{equation*}
 E_{\byy|\tau^2}
\left[ \frac{\left\{1-\phi(R^2)\right\}\mbox{RSS}}{(n-p-1)\{\sigma^2\tau^2\}}
+\log \phi(R^2) \right]
\end{equation*}
is the risk difference under the Gaussian assumption, which is given by
\begin{equation*}
R\left(\{\alpha,\bbe,\tau^2\sigma^2\}, \delta_U\right)
-R\left(\{\alpha,\bbe,\tau^2\sigma^2\}, \delta_\phi\right)
\end{equation*}
where $ \bm{y}\sim N_n(\alpha\bone_n+\bX\bbe,\sigma^2\tau^2\bm{I}_n)$.
From the assumption of the theorem, it is non-negative for any $\tau^2>0$. 
Hence it suffices to show that the second term is non-negative.

For given $\tau^2$, $ \{\|\byy- \bar{y}\bone_n\|^2-\mbox{RSS}\}/\{\sigma^2\tau^2\}(=U)$ 
and $\mbox{RSS}/\{\sigma^2\tau^2\}(=V)$ are independently distributed 
as $ \chi^2_{p}(\lambda/\tau^2)$
with $\lambda=\bbe'\bX'\bX\bbe/\sigma^2$ and $ \chi^2_{n-p-1}$.
Since $R^2$ is given by $1-\mbox{RSS}/\|\byy-\bar{y}\bone_n\|^2=(1+V/U)^{-1}$,
the second term of the right-hand side of \eqref{eq:risk-diff}
is written as
\begin{equation*}
\begin{split}
&E_{\tau^2}\left[E_{\byy|\tau^2}\left[\frac{\{1-\phi(R^2)\}\mbox{RSS}}{\sigma^2\tau^2}
\left(\tau^2-1\right)\right]\right]  \\
&=E_{\{U,V,\tau^2\}}\left[\left\{1-\phi(\{1+V/U\}^{-1})\right\} (\tau^2-1) V\right]  \\
%&\qquad =E_{V}\left[E_{\{U,\tau^2|V\}}\left[\phi(\{1+V/U\}^{-1}) (1-\tau^2)|V\right]V\right] \\
& =E_{V}\left[E_{\tau^2|V}\left[\psi(\tau^2,V)(\tau^2-1)\right]V\right],
%\Delta &=
%E_{\{U,V,\tau^2\}}\left[1-
%\frac{ \phi(\{1+V/U\}^{-1}) \tau^2 V}
%{n-p-1}+ \log \phi(\{1+V/U\}^{-1})\right] \\
%&=1-\frac{E_{V}\left[E_{\tau^2|V}\left[ E_{U|\{\tau^2,V\}}\left[ \phi(\{1+V/U\}^{-1})|\tau^2,V \right] \tau^2 |V \right]V\right]}{n-p-1} \\
%& \qquad + E_{\{U,V,\tau^2\}}\left[ \log \phi(\{1+V/U\}^{-1})\right].
\end{split}
\end{equation*}
where 
\begin{equation}
 \psi(\tau^2,v)=1-E\left[ \phi(\{1+v/\chi^2_p(\lambda/\tau^2)\}^{-1}) \right].
\end{equation}
By the monotone likelihood ratio property of non-central $\chi^2$,
$\psi(\tau^2,v)$ is non-decreasing in $\tau^2$ for any fixed $v$.
%we have the following lemma.
%\begin{lemma} \label{lem:non-chi}
% $E[\psi(\chi_k^2(\lambda/\tau)|\tau]$ with $\lambda>0$
%is decreasing in $\tau$ if $ \psi$ is increasing.
%\end{lemma}
%By Lemma \ref{lem:non-chi},
%$ $ is decreasing in
%$\tau^2$. 
Further, by the covariance inequality, 
 \begin{equation}\label{eq:inequality}
 E_{\tau^2|V}\left[\psi(\tau^2,V)(\tau^2-1)\right] 
%E_{\{U,\tau^2|V\}}\left[\phi(\{1+V/U\}^{-1}) (1-\tau^2)|V\right] \\
%& =E_{\tau^2|V}\left[ E_{U|\{\tau^2,V\}}\left[ \phi(\{1+V/U\}^{-1})|\tau^2,V \right] 
%(1-\tau^2) |V \right] \\
  \geq  E_{\tau^2|V}[\tau^2-1]  E_{\tau^2|V}\left[\psi(\tau^2,V) \right] 
=0
%& \qquad \qquad \qquad =  E_{\{U,\tau^2\}|V}\left[ \phi(\{1+V/U\}^{-1})| V \right].
 \end{equation}
since $V$ and $\tau^2$ are mutually independent and $E[\tau^2]=1$.
The inequality \eqref{eq:inequality} implies that
the second term of the right-hand side of \eqref{eq:risk-diff} is non-negative.
%Hence we get 
%\begin{equation*}
% \begin{split}
%  \Delta  &\geq 
%E_{\{U,V,\tau^2\}}\left[ 1-\frac{\phi(\{1+V/U\}^{-1})V}{n-p-1}
%+ \log \phi(\{1+V/U\}^{-1})\right] \\
%& =E_{\tau^2}\left[
%E_{\{U,V\}|\tau^2}\left[ 1-\frac{\phi(\{1+V/U\}^{-1})V}{n-p-1}
%+ \log \phi(\{1+V/U\}^{-1})\right]\right] \\
%&= E_{\tau^2}\left[ R_G\left(\{\alpha,\bbe,\sigma^2\tau^2\}, \delta_U\right)
%-R_G\left(\{\alpha,\bbe,\sigma^2\tau^2\}, \delta_\phi \right)
%\right] \\
%& \geq 0,
% \end{split}
%\end{equation*}
%where $R_G$ is the risk function under the Gaussian assumption.
\end{proof}
Under the normality assumption, \cite{Brewster-Zidek-1974} showed that
the estimator $\phi(R^2)\delta_U$ with nondecreasing $ \phi$
dominates the unbiased estimator $ \delta_U$ if $ \phi^{BZ} \leq \phi \leq 1$,
where $\phi^{BZ}$ is given by \eqref{phi-BZ}.
\cite{Maru-Straw-2006} demonstrated that the generalized Bayes estimator 
of Theorem \ref{thm:harmonic} with $a=2$ satisfies this condition.
Hence our main result shows that the generalized Bayes estimator 
of Theorem \ref{thm:harmonic} with $a=2$,
is minimax for the entire class of variance mixture of normal distributions.
\begin{thm}\label{thm:main2}
Let $ n-1> p\geq 3$. Under Stein's loss, the estimator given by
\begin{equation}\label{delta-H}
\delta^{H}=\phi^{H}(R^2)\frac{\mathrm{RSS}}{n-p-1}
\end{equation}
where 
\begin{equation}\label{phi-H}
\phi^{H}(R^2)= \frac{n-p-1}{n-3}\frac{\int_0^1
t^{p/2-2}(1-R^2t)^{(n-p-3)/2}
\, dt}{\int_0^1
t^{p/2-2}(1-R^2t)^{(n-p-1)/2}
\, dt}
\end{equation}
is minimax and generalized Bayes with respect to the harmonic prior
\begin{equation} \label{harmonic-prior}
 \pi(\alpha,\bbe,\sigma^2)=(\bbe'\bX'\bX\bbe)^{-(p-2)/2}\{\sigma^2\}^{-1}
\end{equation}
for the entire class of scale mixture of normals.
\end{thm}

\begin{remark}\label{rem:Rsq}
Note that the coefficient of determination is given in \eqref{eq:R^2}
and
that the expectations of the numerator and the denominator are given by
%\begin{equation*}
%R^2
%=\frac{ \|\bX(\bX'\bX)^{-1}\bX'\{\byy-\bar{y}\bone_n\}\|^2}{\|\byy-\bar{y}\bone_n\|^2}
%\end{equation*}
\begin{equation*}
\begin{split}
& E\left[\|\bX(\bX'\bX)^{-1}\bX'\{\byy-\bar{y}\bone_n\}\|^2\right]=
\sigma^2\{\xi+p\}, \\
& E\left[\|\byy-\bar{y}\bone_n\|^2\right]=\sigma^2\{\xi+n-1\},
\end{split}
\end{equation*}
where $\xi=\bbe'\bX'\bX\bbe/\sigma^2$.
Hence the smaller $R^2$ corresponds to the smaller $ \xi$ since $n-1>p$.

Our class of improved estimators utilizes the coefficient of determination
$R^2$ in making a (smooth) choice between $\delta_U$ (when $R^2$ and $\xi$ are large)
and $\|\byy-\bar{y}\bone_n\|^2/(n-1)$ (when $R^2$ and $\xi$ are small)
and reflects the relatively common knowledge among statisticians, 
that $\|\byy-\bar{y}\bone_n\|^2/(n-1)$
%$ \delta_U=\mbox{RSS}/(n-p-1)$ 
is stochastically closer to $\sigma^2$ when $R^2$ is small.
\end{remark}

\begin{remark}
The estimator $\delta^{H}$ is not the only minimax generalized Bayes estimator
under scale mixture of normals. 
In Theorem \ref{thm:harmonic}, we also provided the generalized Bayes estimator   
with respect to superharmonic prior given by  
$\pi(\alpha,\bbe,\sigma^2)=(\bbe'\bX'\bX\bbe)^{-(p-a)/2}\{\sigma^2\}^{-1}$. 
In \cite{Maru-Straw-2006}, we show that for $\delta^{GB}_a$ 
with $2< a(n,p) \leq a <p$ is minimax in the normal case with a monotone $\phi_a^{GB}$.
Hence for $a$ in this range $\delta^{GB}_a$ is also minimax and generalized Bayes
for the entire class of scale mixture of normals.
The bound $a(n,p)$ has a somewhat complicated form 
and we omit the details (however, see \cite{Maru-Straw-2006} for details).

Note that $\delta^H$ corresponds to $\delta^{GB}_a$ with $ a=2$
since the corresponding prior to $\delta^H$  is given by \eqref{harmonic-prior}.
Note also that the unbiased estimator $\delta_U$ which is derived as
the Jeffrey's prior $ \pi(\alpha,\bbe,\sigma^2)=1/\sigma^{2}$ corresponds
to $\lim_{a\to p}\delta^{GB}_a$.
Therefore
%Since   and $\delta_U$ correspond to $a=2$ to $a=p$, respectively,
we conjecture that $\delta^{GB}_a$ with any $a\in(2,p)$ is minimax.
\end{remark}

\begin{remark}
Under the normality assumption, 
\cite{Maru-Straw-2006} gave a subclass of minimax 
generalized Bayes estimators with the particularly simple form
\begin{equation}\label{delta^SB}
\delta^{SB}=\left\{1+c (1-R^2)\right\}^{-1}\frac{\mbox{RSS}}{n-p-1}
\end{equation}
for $0<c \leq c(n,p)$ where $c(n,p)$ has a slightly complicated form, which we omit
(see \cite{Maru-Straw-2006} for details).
Under spherical symmetry, this estimator is not necessarily derived 
as generalized Bayes (See the following Remark), 
but is still minimax under scale mixture of normals.
\end{remark}
\begin{remark}
Interestingly, when $ (n-1)/2<p<(n-1)$, the generalized Bayes estimator
with respect to $\|\bbe'\bX'\bX\bbe\|^{-p+(n-1)/2}\{\sigma^2\}^{-1}$ is given by
\begin{equation}\label{delta_*^SB}
\delta^{SB}_*= \left(1+ \frac{2p-n+1}{n-p-1} (1-R^2)\right)^{-1}
\frac{\mbox{RSS}}{n-p-1}
\end{equation}
for the entire class of spherically symmetric distributions 
(See \cite{Maru-Straw-2006} for the technical details).
Hence when 
\begin{equation}\label{cond:n.p}
 (2p-n+1)/(n-p-1) \leq c(n,p),
\end{equation}
$\delta^{SB}_*$ is minimax and generalized Bayes for the entire class of
scale mixture of normals. 
Unfortunately, numerical calculations indicate that, 
for $n$ in the range $(25, 10,000)$, the inequality \eqref{cond:n.p}
is only satisfied 
for $p= (n+1)/2$ for $n$ odd and $n/2$ and $n/2 +1$ for $n$ even.

Actually, under the Gaussian assumption,
 $\delta^{SB}$ given in \eqref{delta^SB} with $c$ larger than $c(n,p)$
can be demonstrated to be minimax numerically even though 
our analytic upper bound on $c$ for minimaxity is $c(n,p)$.
%This is the reason of the limited justification of 
%$\delta_*^{SB}$ given in \eqref{delta_*^SB}.
In practice, since $ \phi^H$ given in \eqref{phi-H} can be calculated
quickly and precisely, we recommend the use of $\delta^H$ given in \eqref{delta-H}.

\end{remark}

\begin{remark}
For Theorems \ref{thm:indep}, \ref{thm:main} and \ref{thm:main2}, 
the choice of the loss function is the key.
Many of the results introduced in Section \ref{sec:intro}
were initially proved under the quadratic loss function $(\delta/\sigma^2-1)^2$.
Under the Gaussian assumption, the corresponding results can be obtained by
replacing $n+2$ by $n$. On the other hand, the generalized Bayes estimator
with respect to $\pi(\alpha, \bbe,\sigma^2)=\pi(\alpha,\bbe)\{\sigma^2\}^{-1}$
depends on the particular sampling model 
and hence robustness results do not hold under non-Gaussian assumption. 
\end{remark}

\section{Concluding Remarks}
\label{sec:CR}
In this paper, we have studied estimation of the error variance in a general linear model
with a spherically symmetric error distribution.
We have shown, under Stein's loss, that
separable priors of the form $\pi(\alpha,\bbe)\{\sigma^2\}^{-1}$
have associated generalized Bayes estimators which are independent of the form
of the (spherically symmetric) sampling distribution. 
We have further exhibited a subclass of ``superharmonic'' priors for which
these generalized Bayes estimators dominate the usual unbiased and best
equivariant estimator, $\delta_U$, for the entire class of scale mixture
of normal error distributions.

We have previously studied a very similar class of prior distributions in the problem
of estimating the regression coefficients $(\alpha,\bbe)$ under quadratic loss
(See \cite{Maru-Straw-2005}).
In that study we demonstrated a similar double robustness property: to wit,
that the generalized Bayes estimators are independent of the form of the
sampling distribution and that they are minimax over the entire class of
spherically symmetric distributions.

The main difference between the classes of priors in the two settings are
a) in the present study, the prior on $\sigma^2$ is proportional to $\{\sigma^2\}^{-1}$
while it is proportional to $ \{\sigma^2\}^{a}$ in the earlier study; and b)
in this paper, the prior on $(\alpha,\bbe)$ is also separable with $\alpha$
being uniform on the real line and $\bbe$ having the ``superharmonic'' form,
while in the earlier paper $(\alpha,\bbe)$ jointly had the superharmonic form.

The difference a) is essential since a prior on $\sigma^2$ proportional to
$\{\sigma^2\}^{-1}$ gives the best equivariant and minimax estimator $\delta_U$,
while such a restriction is not necessary when estimating the regression 
parameters $(\alpha,\bbe)$.

The difference in b) is inessential, and either form of priors on the
regression parameters $(\alpha,\bbe)$ will give estimators with the
double robustness properties in each of the problems studied.
The form of the estimators, of course, will be somewhat different.
In the case of the present paper, the main difference would be to replace
$n-p-1$ by $n-p$ and to replace $R^2$ by
\begin{equation*}
 \{n\bar{y}^2+\|\bX(\bX'\bX)^{-1}\bX'\byy\|^2\}/\|\byy\|^2.
\end{equation*}

As a consequence, the results in these papers suggest that separable priors,
and in particular the ``harmonic'' prior given \eqref{harmonic-prior}, are very worthy
candidates as objective priors in regression problems.
They produce generalized Bayes minimax procedures dominating the classical
unbiased, best equivariant estimators of both regression parameters and 
scale parameters simultaneously and uniformly over a broad class of 
spherically symmetric error distributions.

\appendix
\section{Proof of Theorem \ref{thm:indep}}
The (generalized) Bayes estimator with Stein's loss
is given by $\{E[1/\sigma^2|\byy]\}^{-1}$.  
Under the improper density $\pi(\alpha,\bbe,\sigma^2)=\pi(\alpha,\bbe)\{\sigma^2\}^{-1}$, the generalized Bayes estimator is given by
\begin{equation*}
 \frac{\iint m^f_0(\byy|\alpha,\bbe)\pi(\alpha,\bbe)d\alpha d\bbe}
{\iint m^f_1(\byy|\alpha,\bbe)\pi(\alpha,\bbe)d\alpha d\bbe} 
% =  \frac{\int_0^\infty t^{n/2-1}f(t)dt}{\int_0^\infty t^{n/2}f(t)dt}
%\frac{\int_0^\infty t^{n/2}f_G(t)dt}{\int_0^\infty t^{n/2-1}f_G(t)dt}
%\frac{m_0^G(\byy)}{m_1^G(\byy)}
\end{equation*}
where $m^f_i(\byy|\alpha,\bbe)$ for $i=0,1$ is
the conditional marginal density of $\byy$ with respect to 
$\{\sigma^2\}^{-1-i}$ given $\alpha$ and $\bbe$,
\begin{equation*}
 m^f_i(\byy|\alpha,\bbe)=\int_0^\infty \sigma^{-n} 
f\left(\frac{\|\byy-\alpha \bone_n -\bX\bbe\|^2}{\sigma^2}\right)
(\sigma^2)^{-i-1} d\sigma^2.
\end{equation*}
Further we have
\begin{equation*}
\begin{split}
&  m^f_i(\byy|\alpha,\bbe)
 = \|\byy-\alpha \bone_n -\bX\bbe\|^{-n-2i}\int_0^\infty t^{\{n+2i\}/2-1}f(t)dt  \\
&= \frac{\int_0^\infty t^{(n+2i)/2-1}f(t)dt}{\int_0^\infty t^{(n+2i)/2-1}f_G(t)dt}
\int_0^\infty 
f_G\left(\frac{\|\byy-\alpha \bone_n -\bX\bbe\|^2}{\sigma^2}\right)
\frac{(\sigma^2)^{-i-1}}{\sigma^{n} } d\sigma^2 
\end{split}
\end{equation*}
where 
\begin{equation*}
 f_G(t)=\frac{1}{(2\pi)^{n/2}}\exp(-t/2).
\end{equation*}
Hence the generalized Bayes estimator %with respect to 
%$\pi(\alpha,\bbe,\sigma^2)=\pi(\alpha,\bbe)\{\sigma^2\}^{-1}$
is
\begin{equation*}
 \frac{\iint m^f_0(\byy|\alpha,\bbe)\pi(\alpha,\bbe)d\alpha d\bbe}
{\iint m^f_1(\byy|\alpha,\bbe)\pi(\alpha,\bbe)d\alpha d\bbe} 
 =  \frac{\int_0^\infty t^{n/2-1}f(t)dt}{\int_0^\infty t^{n/2}f(t)dt}
\frac{\int_0^\infty t^{n/2}f_G(t)dt}{\int_0^\infty t^{n/2-1}f_G(t)dt}
\frac{m_0^G(\byy)}{m_1^G(\byy)}
\end{equation*}
where
\begin{equation*}
 m_i^G(\byy)=\iiint
f_G\left(\frac{\|\byy-\alpha \bone_n -\bX\bbe\|^2}{\sigma^2}\right)
\frac{\pi(\alpha,\bbe)}{(\sigma^2)^{n/2+i+1} }d\alpha d\bbe  d\sigma^2.
\end{equation*}
Since $\bep$ has a spherically symmetric density $f(\bep'\bep)$ and
$E[\bep]=\bzero_n$ and $\mathrm{Var}[\bep]=\bm{I}_n$,
$f$ as well as $f_G$ satisfies 
\begin{equation} \label{f-density}
\int_{\mathcal{R}^n}f( \bep'\bep)d\epsilon
=\frac{\pi^{n/2}}{\Gamma(n/2)}\int_0^\infty s^{n/2-1}f(s)ds=1,
\end{equation}
and
\begin{equation} \label{f-density-1}
\int_{\mathcal{R}^n}\bep'\bep f( \bep'\bep)d\epsilon
=
\frac{\pi^{n/2}}{\Gamma(n/2)}\int_0^\infty s^{n/2}f(s)ds=n.
\end{equation}
%$f$ and $f_G$ both satisfy \eqref{f-density} and \eqref{f-density-1}, 
Hence we have
\begin{equation*}
 \frac{\int_0^\infty t^{n/2-1}f(t)dt}{\int_0^\infty t^{n/2}f(t)dt}
\frac{\int_0^\infty t^{n/2}f_G(t)dt}{\int_0^\infty t^{n/2-1}f_G(t)dt}
=\frac{1}{n} \cdot \frac{n}{1}=1
\end{equation*}
and hence the generalized Bayes estimator 
is given by $ m_0^G(\byy)/m_1^G(\byy)$ which is independent of $f$.

\section{Proof of Theorem \ref{thm:harmonic}}
Note, for $0<a<p$, 
\begin{equation} \label{improper-joint}
\begin{split}
& (\bbe'\bX'\bX\bbe)^{-(p-a)/2}
= \frac{2^{a/2}\pi^{p/2}}{\Gamma(\{p-a\}/2)|\bX'\bX|^{1/2} } \\
& \qquad  \qquad  \qquad \times 
\{\sigma^2\}^{a/2}\int_0^{\infty} g^{a/2-1} 
\frac{|\bX'\bX|^{1/2}}{(2\pi\sigma^2g)^{p/2}}
\exp\left(-\frac{\bbe'\bX'\bX\bbe}{2\sigma^2 g}\right)dg.
\end{split}
\end{equation}
Then
\begin{equation} \label{m-i-G}
\begin{split}
& m^G_i(\byy) 
=A 
\int_{-\infty}^{\infty} \int_{R^{p}}
 \int_{0}^{\infty}  \int_{0}^{\infty}
\frac{1}{(2\pi\sigma^2)^{n/2}} 
\exp\left(-\frac{\|\byy-\alpha \bone_n -\bX\bbe\|^2}{2\sigma^2}\right) \\
& \qquad \times  
\frac{g^{a/2-1}}{\{\sigma^2\}^{-a/2+1+i}}
\frac{|\bX'\bX|^{1/2}}{(2\pi\sigma^2)^{p/2}g^{p/2}}
\exp\left(-\frac{\bbe'\bX'\bX\bbe}{2\sigma^2 g}\right)
d \alpha \, d \bbe \, d\sigma^2 \, dg,
\end{split}
\end{equation}
where $ A=\{2^{a/2}\pi^{p/2}\}/\{\Gamma(\{p-a\}/2)|\bX'\bX|^{1/2} \}$.
In the following, we calculate the integration in \eqref{m-i-G}
with respect to $\alpha$, $\bbe$, 
$\sigma^2$, and $g$, in this order.

By the simple relation
\begin{equation*}
\byy -\alpha \bone_n-\bX \bbe
= (-\alpha+\bar{y})\bone_n
 + \bm{v}-\bX \bbe
\end{equation*}
where $\bar{y}$ mean the mean of $\byy$ and $\bm{v}=\byy-\bar{y}\bone_n$,
we have the Pythagorean relation,
\begin{equation*}
\| \byy -\alpha \bone_n-\bX \bbe \|^2
= n(-\alpha+\bar{y})^2
 + \|\bm{v}-\bX \bbe \|^2,
\end{equation*}
since $\bX$ has been already centered.
Then we have 
\begin{equation*}
\begin{split}
&\int_{-\infty}^{\infty} 
\frac{1}{(2\pi\sigma^2)^{n/2}}
\exp\left(-\frac{\|\byy-\alpha \bone_n -\bX\bbe\|^2}{2\sigma^2}\right)
d \alpha  \\
&=
 \frac{n^{1/2}}{(2\pi\sigma^2)^{(n-1)/2}}
\exp\left( 
-\frac{\| \bm{v}-\bX\bbe\|^2}{2\sigma^2}\right) . 
\end{split}
\end{equation*}
Next we consider the integration with respect to $\bbe$.
Note the relation of completing squares with respect to $\bbe$ 
\begin{equation*}
\begin{split}
& \| \bm{v}-\bX\bbe\|^2+g^{-1}\bbe'\bX'\bX\bbe \\
&= \frac{1+g}{g}\left(\bbe - \frac{g}{1+g}\hat{\bbe}\right)'\bX'\bX
\left(\bbe - \frac{g}{1+g}\hat{\bbe}\right) 
+ \frac{\|\bm{v}\|^2}{1+g}\left\{ g(1-R^2)+1\right\}
\end{split}
\end{equation*}
where $\hat{\bbe}=(\bX'\bX)^{-1}\bX'\bm{v}$ and 
$R^2=\|\bX\hat{\bbe}\|^2/\|\bm{v}\|^2$ is the coefficient of determination.
Hence we have
\begin{equation}\label{marginal-known-3}
\begin{split}
&\int_{-\infty}^{\infty} \int_{R^p}\frac{1}{(2\pi\sigma^2)^{n/2}} 
\exp\left(-\frac{\|\byy-\alpha \bone_n -\bX\bbe\|^2}{2\sigma^2}\right) \\
& \quad \times \frac{|\bX'\bX|^{1/2}}{(2\pi\sigma^2)^{p/2}g^{p/2}}
\exp\left(-\frac{\bbe'\bX'\bX\bbe}{2\sigma^2 g}\right)
d \alpha \, d\bbe  \\
& \qquad =  \frac{n^{1/2}(1+g)^{-p/2}}{(2\pi\sigma^2)^{(n-1)/2}}
\exp\left(
-\frac{\|\bm{v}\|^2\{g(1-R^2)+1\}}{2\sigma^2(g+1)} \right).
\end{split}
\end{equation}
Next we consider integration with respect to $\sigma^2$.
By \eqref{marginal-known-3}, we have
\begin{equation}\label{marginal-known-4}
\begin{split}
&\int_{-\infty}^{\infty} \int_{R^p} \int_0^\infty
\frac{1}{(2\pi\sigma^2)^{n/2}}
\exp\left(-\frac{\|\byy-\alpha \bone_n -\bX\bbe\|^2}{2\sigma^2}\right) \\
& \quad \times \frac{|\bX'\bX|^{1/2}}{(2\pi\sigma^2)^{p/2}g^{p/2}}
\exp\left(-\frac{\bbe'\bX'\bX\bbe}{2\sigma^2 g}\right) \{\sigma^2\}^{a/2-1-i}
d \alpha \, d\bbe \, d\sigma^2\\
& \qquad =  \frac{2^{-a/2+i}n^{1/2}\Gamma(\{n-a-1+2i\}/2)}{\pi^{(n-1)/2}\|\bm{v}\|^{n-a-1+2i}}
\frac{(1+g)^{(n-p-a-1+2i)/2}}{\left\{ g(1-R^2)+1\right\}^{(n-a-1+2i)/2}}.
\end{split}
\end{equation}
Finally  we consider integration with respect to $g$. 
By \eqref{marginal-known-4} we have
\begin{equation}
\begin{split}
 m_i^G(\byy) 
& = 
A\frac{2^{-a/2+i}n^{1/2}\Gamma(\{n-a-1+2i\}/2)}{\pi^{(n-1)/2}\|\bm{v}\|^{n-a-1+2i}} \\
&   \qquad \times \int_0^\infty 
\frac{g^{a/2-1}(1+g)^{(n-p-a-1+2i)/2}}{\left\{ g(1-R^2)+1\right\}^{(n-a-1+2i)/2}}
\, dg \\
& = A\frac{2^{-a/2+i}n^{1/2}\Gamma(\{n-a-1+2i\}/2)}{\pi^{(n-1)/2}\|\bm{v}\|^{n-a-1+2i}
(1-R^2)^{(n-p-1-2i)/2}}
 \\
&  \qquad \times \int_0^1
t^{p/2-a/2-1}(1-t)^{a/2-1}(1-R^2t)^{(n-p-a-1+2i)/2}
\, dt.
\end{split}
\end{equation}
The second equality follows from the change of variables $1/\{1+g(1-R^2)\} \to t$.
By using the relation $ (1-R^2)\|\byy-\bar{y}\bone_n\|^2=\mbox{RSS}$, 
$m_0^G(\byy)/m_1^G(\byy)$ is written as \eqref{eq:GB}.

\section*{Acknowledgements}
We are very grateful to the associate editor, and referees 
for wonderful insights which substantially helped us to strengthen this paper.

%\bibliography{../maru-bib}
%\bibliographystyle{../imsart-nameyear}

\end{document}